\documentclass{article}%
\usepackage{amsmath}
\usepackage{amsfonts}
\usepackage{amssymb}
\usepackage{graphicx}
\usepackage{amssymb}
\usepackage{lineno}%
\setcounter{MaxMatrixCols}{30}
\providecommand{\U}[1]{\protect\rule{.1in}{.1in}}
\newtheorem{lemma}{Lemma}
\newtheorem{theorem}{Theorem}
\newtheorem{conclusion}{Conclusion}

\newtheorem{example}{Example}

\newtheorem{remark}{Remark}
\newenvironment{proof}[1][Proof]{\noindent\textbf{#1.} }{\ \rule{0.5em}{0.5em}}
\begin{document}

\title{The generalized Hadamard product of polynomials and its stability}
\author{Stanis\l aw Bia\l as\\The School of Banking and Management,\\ ul. Armii Krajowej 4, 30-150 Krak\'{o}w, Poland\\
Michal G\'{o}ra\\AGH University of Science and Technology,\\ Faculty of Applied Mathematics,\\ al.
Mickiewicza 30, 30-059 Krak\'{o}w, Poland}
\maketitle

\begin{abstract}
For two polynomials of degrees $n$ and $m$ ($n\geq m$)
\begin{align*}
f\left(  s\right)    & =a_{0}+a_{1}s+\ldots+a_{n-1}s^{n-1}+a_{n}s^{n}\\
g\left(  s\right)    & =b_{0}+b_{1}s+\ldots+b_{m-1}s^{m-1}+b_{m}s^{m}%
\end{align*}
we define a set of polynomials $f\bullet g  =\left\{  F_{0},\ldots,F_{n-m}\right\}  $, where
\[
F_{j}\left(  s\right)  =a_{j}b_{0}+a_{j+1}b_{1}s+\ldots+a_{j+m}b_{m}s^{m},
\]
for $j=0,\ldots,n-m$, and call it \textit{a generalized Hadamard product of $f$ and $g$}.  We give sufficient conditions for the Hurwitz stability of $f\bullet g$. The obtained results show that the famous Garloff--Wagner theorem on the Hurwitz stability of the Hadamard product of polynomials is a special case of a more general fact. We also show that for every polynomial with positive coefficients (even not necessarily stable) one can find a polynomial such that their generalized
Hadamard product is stable. Some connections with polynomials admitting the Hadamard factorization are also given. Numerical examples complete and illustrate the considerations.
\end{abstract}


\section{Introduction}

A polynomial is said to be (Hurwitz) stable if all its zeros lie in the open
left-half of the complex plane. In the entire class of polynomials, stable
polynomials occupy a special place. They occur very frequently in many
applications (e.g. in the control theory or in the theory of dynamical
systems) and thus they are important both in theory and in practice. From
among many interesting properties of stable polynomials we recall one result
which is closely related to this work. In 1996 Garloff and Wagner proved in
\cite{Gar-Wag} that the set of Hurwitz stable polynomials is closed under the
Hadamard product. In this paper we show among others that this result is a
special case of a much more general fact.

The work is organized as follows. After preliminary Section 2, we introduce in
Section 3 a generalized Hadamard product of a pair of polynomials. In contrary
to the (usual) Hadamard product, the generalized Hadamard product is not a
polynomial but it is a set of polynomials including, as one of elements, the
(usual) Hadamard product. We give sufficient conditions for its stability. The
results obtained are based on the Garloff--Wagner theorem \cite{Gar-Wag} and
on sufficient conditions for the stability of a real polynomial given by
Katkova and Vishnyakova \cite{Kat-Vis} and Kleptsyn \cite{Kle}. We also show
that for every polynomial  with positive coefficients (but not necessarily stable) one can find a polynomial
such that their generalized Hadamard product is stable. Finally, in Section~4
we give examples completing our considerations and illustrating the results.

\section{Definitions and preliminary results}

In this section, we introduce the basic definitions and notation and remind
some results that will be used throughout this article at various places.

\subsection{Basic notations}

\bigskip We use standard notation: $\mathbb{R}$ and $\mathbb{R}^{n\times n}$
stand for the set of real numbers and for the set of real matrices of order
$n\times n$, respectively; $\mathbb{N}$ denotes the set of positive integers;
$\mathfrak{Re}\left(  \cdot\right)  $ stands for the real part of a complex
number. The degree of a polynomial will be denoted by $\deg\left(
\cdot\right)  $.

\subsection{Stable polynomials and related polynomial families}

\bigskip A polynomial $f$ of degree $n$ $\left(  n\geq1\right)  $%
\begin{equation}
f\left(  s\right)  =a_{0}+a_{1}s+\ldots+a_{n-1}s^{n-1}+a_{n}s^{n} \label{w1}%
\end{equation}
is said to be \textit{Hurwitz stable }(or shortly \textit{stable}) if all its
zeros have negative real parts, and it is said to be \textit{quasi--stable} if
these zeros have non-positive real parts. Together with polynomial (\ref{w1}),
we will consider a polynomial $f^{\ast}$ of the form%
\[
f^{\ast}\left(  s\right)  =a_{n}+a_{n-1}s+\ldots+a_{1}s^{n-1}+a_{0}%
s^{n}\text{.}%
\]
For every nonzero $s$ we have $f\left(  s\right)  =s^{n}f^{\ast}(s^{-1})$ and
thus the polynomial $f$ is stable if and only if $f^{\ast}$ is.

It is well known (and easy verified) that a necessary condition for the
stability of a real polynomial is that its coefficients are all of the same
sign. Without losing generality we will assume in the sequel that they are positive.

Let $\triangle_{i}\left(  f\right)  $ denote the $i$--th leading principal
minor of the Hurwitz matrix $H_{f}\in\mathbb{R}^{n\times n}$ associated with
polynomial (\ref{w1}),%
\[
H_{f}=%
\begin{pmatrix}
a_{n-1} & a_{n} & 0 & 0 & \ldots & 0\\
a_{n-3} & a_{n-2} & a_{n-1} & a_{n} & \ldots & 0\\
a_{n-5} & a_{n-4} & a_{n-3} & a_{n-2} & \ldots & 0\\
\vdots & \vdots & a_{n-5} & a_{n-4} & \ldots & 0\\
\vdots & \vdots & \vdots & \vdots & \ddots & \vdots\\
0 & 0 & 0 & 0 & \ldots & a_{0}%
\end{pmatrix}
\text{,}%
\]
in particular $\triangle_{1}\left(  f\right)  =a_{n-1}$ and $\triangle
_{n}\left(  f\right)  =\det H_{f}=a_{0}\triangle_{n-1}\left(  f\right)  $. It
follows from the Routh--Hurwitz criterion (see, for example, Gantmacher
\cite{Gan}) that polynomial (\ref{w1}) with positive coefficients is stable if
and only if $\triangle_{i}\left(  f\right)  >0,$ for $i=1,2,\ldots,n-1$.

For the simplicity of notation, we introduce the following sets of polynomials:

\begin{itemize}
\item $\mathbb{R}_{n}^{+}=\left\{  s\rightarrow a_{n}s^{n}+\ldots+a_{1}%
s+a_{0}:a_{i}>0\ \left(  i=0,\ldots,n\right)  \right\}  ,$

\item $\mathcal{H}_{n}=\left\{  f\in\mathbb{R}_{n}^{+}:f\left(  s\right)
=0\Rightarrow\mathfrak{Re}\left(  s\right)  <0\right\}  ,$

\item $\overline{\mathcal{H}_{n}}=\left\{  f\in\mathbb{R}_{n}^{+}:f\left(
s\right)  =0\Rightarrow\mathfrak{Re}\left(  s\right)  \leq0\right\}  $
\end{itemize}

and for $n\geq3$ and $\alpha\in\mathbb{R}$

\begin{itemize}
\item $\mathcal{W}_{n}=\{f\in\mathbb{R}_{n}^{+}:\lambda_{i}\left(  f\right)
<1$ $\left(  i=2,\ldots,n-1\right)  \},$

\item $\mathcal{W}_{n}^{\alpha}=\{f\in\mathbb{R}_{n}^{+}:\lambda_{i}\left(
f\right)  <\alpha$ $\left(  i=2,\ldots,n-1\right)  \},$

\item $\mathcal{V}_{n}=\{f\in\mathbb{R}_{n}^{+}:\lambda_{2}\left(  f\right)
+\ldots+\lambda_{n-1}\left(  f\right)  <1\},$
\end{itemize}

where, for $f$ of the form (\ref{w1}), the positive numbers $\lambda
_{2}\left(  f\right)  ,\ldots,\lambda_{n-1}\left(  f\right)  $ are defined as
\begin{equation}
\lambda_{i}\left(  f\right)  =\frac{a_{i-2}a_{i+1}}{a_{i}a_{i-1}}\text{,\quad
for }i=2,\ldots,n-1\text{.} \label{w101}%
\end{equation}
Connections between sets $\mathcal{W}_{n}$, $\mathcal{W}_{n}^{\alpha}$,
$\mathcal{V}_{n}$ and $\mathcal{H}_{n}$ will be discussed in detail in Section~\ref{Sec3}.

\subsection{The Hadamard product of polynomials}

\bigskip Let $f$ be as in (\ref{w1}) and let $g\in\mathbb{R}_{m}^{+}$ be of
the form%
\begin{equation}
g\left(  s\right)  =b_{0}+b_{1}s+\ldots+b_{m-1}s^{m-1}+b_{m}s^{m}\text{.}
\label{w2}%
\end{equation}
Supposing that $m=\deg\left(  g\right)  \leq\deg\left(  f\right)  =n$ we can
define the polynomial $f\circ g\in\mathbb{R}_{m}^{+}$ of the form%
\[
\left(  f\circ g\right)  \left(  s\right)  =a_{0}b_{0}+a_{1}b_{1}%
s+\ldots+a_{m-1}b_{m-1}s^{m-1}+a_{m}b_{m}s^{m}%
\]
called \textit{the Hadamard product of polynomials} $f$ \textit{and} $g$.

The Hadamard product of polynomials has been studied by many authors, but we
will mention here only one work that we will refer to many times in the
sequel. Namely, Garloff~and~Wagner~considered in \cite{Gar-Wag} the stability
problem for the Hadamard product of two real polynomials and obtained, among
others, the following

\begin{theorem}
[Garloff, Wagner]Let $m\leq n$ be positive integers. If $f\in\mathcal{H}_{n}$
and $g\in\mathcal{H}_{m}$,$\ $then $f\circ g\in\mathcal{H}_{m}$.
\end{theorem}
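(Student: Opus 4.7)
The cases $m\in\{1,2\}$ are immediate, since every polynomial of degree $\leq 2$ with positive coefficients is Hurwitz stable and $f\circ g$ has positive coefficients $a_kb_k$ by hypothesis. I henceforth assume $m\geq 3$ and first address the equal-degree case $n=m$.

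My plan is to use the Hermite--Biehler characterisation of stability: writing a real polynomial with positive coefficients as $p(s)=P(s^2)+sQ(s^2)$, stability of $p$ is equivalent to $P$ and $Q$ having only simple negative real zeros which strictly interlace, with the largest belonging to $P$. Setting $f(s)=F_e(s^2)+sF_o(s^2)$ and $g(s)=G_e(s^2)+sG_o(s^2)$, and noting that the Hadamard product respects the parity of coefficient indices, we get
\[
(f\circ g)(s)=(F_e\odot G_e)(s^2)+s\,(F_o\odot G_o)(s^2),
\]
where $\odot$ denotes the ordinary Hadamard product in the variable $x=s^2$. The classical Schur theorem on Hadamard products of polynomials with only negative real zeros then yields that both $F_e\odot G_e$ and $F_o\odot G_o$ have only negative real zeros.

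The decisive step, and the main obstacle I anticipate, is to show that the zeros of $F_e\odot G_e$ and $F_o\odot G_o$ strictly interlace -- a property not generally preserved by Hadamard products. I would try a direct route first: express strict interlacing as positivity of the bilinear form $(F_e\odot G_e)'(F_o\odot G_o)-(F_e\odot G_e)(F_o\odot G_o)'$ on $[0,\infty)$ and search for a bilinear identity writing this form as a manifestly positive combination of the corresponding (positive) Wronskians of $(F_e,F_o)$ and $(G_e,G_o)$. Failing that, I would resort to a homotopy/continuity argument: deform $(f,g)$ within the open set $\mathcal{H}_n\times\mathcal{H}_n$ to the reference pair $((s+1)^n,(s+1)^n)$, for which interlacing of the associated Hadamard products can be verified directly, and argue that interlacing cannot be lost on the path, since such a loss would entail a purely imaginary zero of $f\circ g$, which I would try to preclude via an argument-principle computation on the imaginary axis using the corresponding data for $f$ and $g$.

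Reducing the general case $m<n$ to equal degrees is genuinely delicate: the Hermite--Biehler data of $f\circ g$ depend only on the prefix $a_0,\dots,a_m$ of $f$, but truncations of stable polynomials need not be stable (even the truncation of $(s+1)^3$ to degree two remains positive-coefficient, but higher-degree truncations of larger stable polynomials fail). If the equal-degree argument is in hand, I would try to construct $f_0\in\mathcal{H}_m$ whose coefficients are proportional to $a_0,\dots,a_m$, reducing the statement to $f_0\circ g$; existence of such an $f_0$ would itself require a separate, nontrivial argument, perhaps via an openness/density result on the set of prefixes of stable polynomials. Should that route fail, I would attempt a direct Routh--Hurwitz attack, expanding each determinant $\triangle_i(f\circ g)$ combinatorially in terms of the $a_kb_k$ and trying to match it with a manifestly positive expression in the Hurwitz minors of $f$ and $g$.
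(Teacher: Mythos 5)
The paper does not prove this theorem at all: it is quoted verbatim from Garloff and Wagner \cite{Gar-Wag} and used as a black box throughout, so there is no in-paper argument to compare yours against. Judged on its own terms, your proposal is a research plan rather than a proof, and the two steps that carry essentially all of the difficulty are both left open. After the (correct) parity decomposition $(f\circ g)(s)=(F_e\odot G_e)(s^2)+s\,(F_o\odot G_o)(s^2)$ and the appeal to the Malo--Schur composition theorem for real-rootedness, you explicitly do not establish the interlacing of the zeros of $F_e\odot G_e$ and $F_o\odot G_o$; you only list two strategies you ``would try''. The first (a manifestly positive bilinear identity for the Wronskian-type form of the Hadamard products) is not exhibited, and the second (a homotopy within $\mathcal{H}_n\times\mathcal{H}_n$) is essentially circular: ruling out a purely imaginary zero of $f\circ g$ along the deformation is precisely the zero-exclusion statement that is equivalent to the theorem, and no independent argument-principle computation is supplied that would preclude it.

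The reduction of the case $m<n$ to equal degrees is likewise only sketched. The needed fact that the prefix $a_0,\dots,a_m$ of a stable $f$ can be matched, coefficient by coefficient up to the structure required by the Hadamard product, by some $f_0\in\mathcal{H}_m$ is nontrivial and, as you yourself concede, unproved; Example~\ref{ex3} of the paper shows that plain truncation of a stable polynomial can fail to be stable, so this is a genuine obstruction and not a formality. For reference, Garloff and Wagner's published proof also works in the Hermite--Biehler/positive-pair formulation, but it closes the interlacing gap by invoking the total-nonnegativity characterization of positive pairs and the closure of the relevant classes of totally nonnegative (P\'olya frequency) sequences under Hadamard multiplication; some substantive external input of that kind is exactly what your sketch is missing.
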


\section{The generalized Hadamard product of polynomials and its stability}

\label{Sec3} \bigskip Here and below in this section polynomials $f$ and $g$
are of the form (\ref{w1}) and (\ref{w2}), respectively, and such that
$m=\deg\left(  g\right)  \leq\deg\left(  f\right)  =n$. Let $f_{0}%
,\ldots,f_{n-m}\in\mathbb{R}_{m}^{+}$ be of the form%
\begin{equation}
f_{j}\left(  s\right)  =a_{j}+a_{j+1}s+\ldots+a_{j+m}s^{m},\text{ for
}j=0,\ldots,n-m\text{.} \label{w22}%
\end{equation}
Then, \textit{the generalized Hadamard product of a pair of polynomials $f$
and $g$} is defined as a set of polynomials
\[
f\bullet g=\left\{  F_{0},\ldots,F_{n-m}\right\}  ,
\]
where $F_{j}=f_{j}\circ g$ ($j=0,\ldots,n-m$).

Note that the generalized Hadamard product is a generalization of the (usual)
Hadamard product. Indeed, if $\deg f=\deg g$ then $f\bullet g=\left\{  f\circ
g\right\}  ,$ and in general case (i.e. if $\deg f\leq\deg g$) we have $f\circ
g=F_{0}\in f\bullet g$.

In our main results, we want to focus the attention on the stability of the
generalized Hadamard product of polynomials. As Example~3 shows, the stability
of the polynomial $f$ does not imply those of $f_{0},\ldots,f_{n-m}$ and, in a
consequence, the stability of the generalized Hadamard product of two
\textit{stable} polynomials is not an immediate consequence of the
Garloff--Wagner theorem.

\subsection{Main results}

\label{Subsec3} \bigskip As regards $\mathcal{H}_{n}$, the set of stable
polynomials of degree $n$, it is easy to see that $\mathcal{H}_{1}%
=\mathbb{R}_{1}^{+}$ and $\mathcal{H}_{2}=\mathbb{R}_{2}^{+}$. Moreover,
Kemperman proved in \cite{Kem} that each principal submatrix (i.e. an $\left(
n-k\right)  $-by-$\left(  n-k\right)  $ matrix obtained from a given
$n$-by-$n$ matrix by removing its $k$ rows and the same $k$ columns) of the
Hurwitz matrix associated with a stable polynomial has a positive determinant.
It allows us to conclude that for $n\geq3$%
\begin{equation}
\mathcal{H}_{n}\subset\mathcal{W}_{n}\text{.} \label{w11}%
\end{equation}
For $n=3$, by the Routh--Hurwitz criterion, we have $\mathcal{H}%
_{3}=\mathcal{W}_{3}$, and in general case inclusion (\ref{w11}) is proper
(see Section~\ref{Sec_ex} for numerical examples completing the results). The
set $\mathcal{W}_{n}$ has been defined only for $n\geq3$, but to simplify the
formulation of our main results we put by definition $\mathcal{W}%
_{k}=\mathcal{H}_{k}$ for $k=1,2$.

We start with the case in which the degree of the generalized Hadamard product
(understood in a natural way as a common degree of all its elements) does not
exceed 4. Note that the polynomial $f$ that occurs in Theorem~\ref{th1} does
not need to be stable.

\begin{theorem}
\label{th1}Let $m,n\in\mathbb{N}$ be such that $1\leq m\leq4$ and $m\leq n$.
If $f\in\mathcal{W}_{n}\ $and $g\in\mathcal{H}_{m}$, then $f\bullet
g\subset\mathcal{H}_{m}$.
\end{theorem}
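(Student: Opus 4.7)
The plan is to show that every member $F_j = f_j\circ g$ of $f\bullet g$ lies in $\mathcal{H}_m$ by verifying the Routh--Hurwitz inequalities directly on the coefficients of $F_j$. Since $F_j$ has positive coefficients (it is the coefficientwise product of two positive-coefficient polynomials), only the conditions $\triangle_i(F_j)>0$ for $i=2,\dots,m-1$ need to be checked. The whole argument will rest on the elementary algebraic identity
\begin{equation*}
\lambda_i(F_j)=\lambda_{j+i}(f)\cdot\lambda_i(g),
\end{equation*}
which I would verify first: it follows from $(F_j)_k = a_{j+k}b_k$ by direct substitution into the definition \eqref{w101}. Note that for $j=0,\dots,n-m$ and $i=2,\dots,m-1$ the index $j+i$ lies in $\{2,\dots,n-1\}$, so each $\lambda_{j+i}(f)$ is one of the numbers bounded by $1$ via the assumption $f\in\mathcal{W}_n$.

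The cases $m=1$ and $m=2$ are immediate, since every polynomial in $\mathbb{R}_1^+\cup\mathbb{R}_2^+$ is stable. For $m=3$, the Routh--Hurwitz condition for a positive-coefficient cubic reduces to the single inequality $\lambda_2<1$ (so $\mathcal{H}_3=\mathcal{W}_3$, as the paper already notes). The multiplicative identity together with $\lambda_{j+2}(f)<1$ and $\lambda_2(g)<1$ yields $\lambda_2(F_j)<1$ at once.

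The real work is the case $m=4$. A short calculation shows that for a positive-coefficient quartic, the Routh--Hurwitz conditions $\triangle_2>0$ and $\triangle_3>0$ translate into
\begin{equation*}
\lambda_3<1 \qquad\text{and}\qquad \lambda_2+\lambda_3<1.
\end{equation*}
For $F_j$ the first of these is harmless: $\lambda_3(F_j)=\lambda_{j+3}(f)\lambda_3(g)<1$ since each factor is less than $1$. For the second, the crucial observation is that $g$ is \emph{itself} assumed stable of degree $4$, so that $\lambda_2(g)+\lambda_3(g)<1$. Combined with $\lambda_{j+2}(f),\lambda_{j+3}(f)<1$ this gives
\begin{equation*}
\lambda_2(F_j)+\lambda_3(F_j)=\lambda_{j+2}(f)\lambda_2(g)+\lambda_{j+3}(f)\lambda_3(g)<\lambda_2(g)+\lambda_3(g)<1,
\end{equation*}
which is precisely $\triangle_3(F_j)>0$. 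Hence $F_j\in\mathcal{H}_4$.

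The main subtlety---and the thing that initially looks like an obstacle---is that $\mathcal{W}_4$ is strictly larger than $\mathcal{H}_4$: the hypothesis $f\in\mathcal{W}_n$ only gives each $\lambda_i(f)<1$ separately, which is weaker than the quartic stability condition $\lambda_2+\lambda_3<1$. The proof works because the weighting factors attached to $\lambda_{j+2}(f)$ and $\lambda_{j+3}(f)$ in the expression for $\triangle_3(F_j)$ are exactly $\lambda_2(g)$ and $\lambda_3(g)$, whose sum \emph{is} controlled by $1$ precisely by the stability of $g$. Once this observation is in place, the verification is just arithmetic with the identity for $\lambda_i(F_j)$.
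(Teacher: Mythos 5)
Your proof is correct. The degree counts are right ($\triangle_1$ is automatic and $\triangle_m=A_{0,j}\triangle_{m-1}$, so only $\triangle_2,\dots,\triangle_{m-1}$ matter), the translation of the quartic Routh--Hurwitz conditions into $\lambda_3<1$ and $\lambda_2+\lambda_3<1$ is exact (this is the fact $\mathcal{H}_4=\mathcal{V}_4$ that the paper cites from Bia\l as and Bia\l as--Cie\.{z} only later), the index bookkeeping $j+i\in\{2,\dots,n-1\}$ is verified, and the final chain of inequalities is valid. Your route differs from the paper's in two ways. First, for $m\leq 3$ the paper shows that the truncations $f_j$ are themselves stable and then invokes the Garloff--Wagner theorem to conclude $f_j\circ g\in\mathcal{H}_m$, whereas you verify the (one-line) Routh--Hurwitz condition for $F_j$ directly; your version is more elementary and does not use Garloff--Wagner at all. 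Second, for $m=4$ the paper works with the raw Hurwitz minors, bounding $\triangle_2(F_j)>a_{j+2}a_{j+3}\triangle_2(g)$ and $\triangle_3(F_j)>a_{j+1}a_{j+2}a_{j+3}\triangle_3(g)$ and then citing the stability of $g$; dividing those inequalities by $a_{j+1}a_{j+2}a_{j+3}b_1b_2b_3$ recovers exactly your statement $\lambda_2(F_j)+\lambda_3(F_j)<\lambda_2(g)+\lambda_3(g)<1$, so the hard case is the same computation in normalized form. What your presentation buys is uniformity: the single identity $\lambda_i(F_j)=\lambda_{i+j}(f)\lambda_i(g)$ (which the paper only introduces as equation (\ref{ww3}) in the proof of Theorem~\ref{th4}) handles all four degrees at once and makes transparent why the hypothesis $f\in\mathcal{W}_n$, weaker than stability, suffices; what the paper's version buys is that the explicit minor estimates make no appeal to the characterization $\mathcal{H}_4=\mathcal{V}_4$.
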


\begin{proof}
It follows from the assumption, that the polynomials

\begin{itemize}
\item $A_{j}\left(  s\right)  =a_{j}+a_{j+1}s,$ for $j=0,\ldots,n-1$;

\item $B_{j}\left(  s\right)  =A_{j}\left(  s\right)  +a_{j+2}s^{2},$ for
$j=0,\ldots,n-2$ and $n\geq2$;

\item $C_{j}\left(  s\right)  =B_{j}\left(  s\right)  +a_{j+3}s^{3},$ for
$j=0,\ldots,n-3$ and $n\geq3$
\end{itemize}

are stable. Thus, for $1\leq m\leq3$ the result follows from the
Garloff--Wagner theorem. For $m=4$, we have $f\bullet g=\left\{  F_{0}%
,\ldots,F_{n-4}\right\}  $ where%
\begin{equation}
F_{j}\left(  s\right)  =a_{j}b_{0}+a_{j+1}b_{1}s+a_{j+2}b_{2}s^{2}%
+a_{j+3}b_{3}s^{3}+a_{j+4}b_{4}s^{4}\text{,} \label{w23}%
\end{equation}
for $j=0,\ldots,n-4$. The Hurwitz matrix associated with the polynomial
$F_{j}$ has the form%
\[
H_{F_{j}}=\left(
\begin{array}
[c]{cccc}%
a_{j+3}b_{3} & a_{j+4}b_{4} & 0 & 0\\
a_{j+1}b_{1} & a_{j+2}b_{2} & a_{j+3}b_{3} & a_{j+4}b_{4}\\
0 & a_{j}b_{0} & a_{j+1}b_{1} & a_{j+2}b_{2}\\
0 & 0 & 0 & a_{j}b_{0}%
\end{array}
\right)  \text{.}%
\]
We will show that all its leading principal minors are positive.

Obviously, $\triangle_{1}\left(  F_{j}\right)  =a_{j+3}b_{3}$. Moreover, since
$f\in\mathcal{W}_{n}$, we have
\begin{align*}
\triangle_{2}\left(  F_{j}\right)   &  =a_{j+2}a_{j+3}b_{2}b_{3}%
-a_{j+1}a_{j+4}b_{1}b_{4}>a_{j+2}a_{j+3}\left(  b_{2}b_{3}-b_{1}b_{4}\right)
=\\
&  =a_{j+2}a_{j+3}\triangle_{2}\left(  g\right)
\end{align*}
and%
\begin{align*}
\triangle_{3}\left(  F_{j}\right)   &  =a_{j+1}a_{j+2}a_{j+3}b_{1}b_{2}%
b_{3}-a_{j}a_{j+3}^{2}b_{0}b_{3}^{2}-a_{j+1}^{2}a_{j+4}b_{1}^{2}b_{4}>\\
&  >a_{j+1}a_{j+2}a_{j+3}\left(  b_{1}b_{2}b_{3}-b_{0}b_{3}^{2}-b_{1}^{2}%
b_{4}\right)  =a_{j+1}a_{j+2}a_{j+3}\triangle_{3}\left(  g\right)  .
\end{align*}
Since $g$ is stable, the result follows from the Routh--Hurwitz criterion.
\end{proof}

In the general case Theorem~\ref{th1} does not hold (see Example~\ref{ex4})
but the following is true:

\begin{theorem}
\label{th2}Let $m\leq n$ be positive integers. If $f\in\mathcal{H}_{n}\ $and
$g\in\mathcal{H}_{m}$, then

\begin{description}
\item[(a)] $F_{0},F_{n-m}\in\mathcal{H}_{m};$

\item[(b)] $f\bullet g\subset\overline{\mathcal{H}_{m}}$.
\end{description}
\end{theorem}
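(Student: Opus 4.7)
I would split the proof into the two assertions (a) and (b).

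For (a), $F_{0}$ and $F_{n-m}$ are handled by essentially symmetric arguments. First, by the very definition of the Hadamard product $f\circ g$ (which only uses the first $m+1$ coefficients of $f$), the polynomial $f\circ g$ coincides with $F_{0}$, so the Garloff--Wagner theorem applied to $f\in\mathcal{H}_{n}$ and $g\in\mathcal{H}_{m}$ gives $F_{0}\in\mathcal{H}_{m}$ immediately. For $F_{n-m}$ I would exploit the reciprocal polynomial operation $f\mapsto f^{\ast}$, which preserves stability (as recalled in Section~2). A short index calculation yields
\[
(f^{\ast}\circ g^{\ast})(s)=\sum_{i=0}^{m}a_{n-i}b_{m-i}s^{i},\qquad
(f^{\ast}\circ g^{\ast})^{\ast}(s)=\sum_{k=0}^{m}a_{n-m+k}b_{k}s^{k}=F_{n-m}(s).
\]
Since $f^{\ast}\in\mathcal{H}_{n}$ and $g^{\ast}\in\mathcal{H}_{m}$, Garloff--Wagner gives $f^{\ast}\circ g^{\ast}\in\mathcal{H}_{m}$, and reversing once more shows $F_{n-m}\in\mathcal{H}_{m}$.

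For (b), I would argue by approximation from inside the stable cone. The key observation is the identity
\[
f\circ\bigl(s^{j}g(s)\bigr)=s^{j}F_{j}(s),
\]
checked by direct coefficient comparison (on the left only indices $j\le k\le j+m$ contribute, and the $k$-th coefficient is $a_{k}b_{k-j}$). The polynomial $s^{j}g$ is merely quasi-stable (it carries a $j$-fold root at $0$) and has vanishing low coefficients, so Garloff--Wagner cannot be applied to $f$ and $s^{j}g$ directly. Instead I would use the approximating family
\[
\tilde{g}_{\varepsilon}(s)=(s+\varepsilon)^{j}\,g(s),\qquad\varepsilon>0,
\]
of degree $j+m\le n$. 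Each $\tilde{g}_{\varepsilon}$ has positive coefficients (product of two polynomials with positive coefficients) and all its roots lie in the open left half-plane (the roots of $g$ together with $-\varepsilon$ of multiplicity $j$), so $\tilde{g}_{\varepsilon}\in\mathcal{H}_{j+m}$. Garloff--Wagner applied to $f$ and $\tilde{g}_{\varepsilon}$ then gives $f\circ\tilde{g}_{\varepsilon}\in\mathcal{H}_{j+m}$, while $f\circ\tilde{g}_{\varepsilon}\to s^{j}F_{j}$ coefficient-wise as $\varepsilon\to 0^{+}$. Because the leading coefficient $a_{j+m}b_{m}$ is fixed and positive throughout, continuity of the roots of a polynomial as a function of its coefficients forces the $j+m$ roots of $s^{j}F_{j}$ to lie in the closed left half-plane; removing the $j$ trivial roots at $0$, the remaining $m$ roots are exactly those of $F_{j}$, and hence $F_{j}\in\overline{\mathcal{H}_{m}}$.

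The main obstacle is guessing the right perturbation in~(b): it must be a polynomial with \emph{positive} coefficients, be strictly Hurwitz stable, have degree at most $n$ (so that Garloff--Wagner applies with $f$ as the larger-degree factor), and converge coefficient-wise to $s^{j}g$. The simple choice $(s+\varepsilon)^{j}g(s)$ satisfies all four requirements simultaneously, and this is what makes the approximation work. Part~(a) is easier and requires only careful index bookkeeping to verify the identity $(f^{\ast}\circ g^{\ast})^{\ast}=F_{n-m}$.
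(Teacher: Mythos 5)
Your proof is correct, and part (a) coincides with the paper's argument ($F_{0}$ directly from Garloff--Wagner, $F_{n-m}=(f^{\ast}\circ g^{\ast})^{\ast}$ plus stability of the reversal). For part (b), however, you take a genuinely different and arguably cleaner route. The paper also proceeds by approximating $s^{j}g(s)$ from inside the stable cone and passing to the limit via continuity of roots, but its approximating family is \emph{additive}: it invokes an auxiliary Lemma (Lemma~\ref{lem2}, itself resting on Lemma~\ref{lem1}, an inductive perturbation result proved via the Hurwitz minors in the Appendix) to produce polynomials $G_{m}(s)=p_{m}(s)+s^{j}g(s)$ with $p_{m}\in\mathbb{R}_{j-1}^{+}$, $p_{m}\to 0$, each $G_{m}$ stable. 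Your family $\tilde{g}_{\varepsilon}(s)=(s+\varepsilon)^{j}g(s)$ is \emph{multiplicative}, and its stability and positivity of coefficients are immediate by inspection of the roots and of the product of positive-coefficient factors; this makes your part (b) self-contained and eliminates the need for Lemmas~\ref{lem1} and \ref{lem2} altogether. The trade-off is that the paper's additive machinery (packaged as Conclusion~\ref{con1}) is reused later in the proof of Theorem~\ref{th3}, so the authors amortize the cost of the lemmas across several results, whereas your shortcut serves only Theorem~\ref{th2}. All the delicate points in your argument check out: $\deg\tilde{g}_{\varepsilon}=j+m\leq n$ so Garloff--Wagner applies, the leading coefficient $a_{j+m}b_{m}$ of $f\circ\tilde{g}_{\varepsilon}$ is independent of $\varepsilon$ so the degree does not drop in the limit, and stripping the $j$ roots at the origin from $s^{j}F_{j}$ correctly yields the quasi-stability of $F_{j}$.
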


To prove this theorem we need two auxiliary lemmas.

\begin{lemma}
\label{lem1}Let $N>n$ be positive integers and let $f\in\mathcal{H}_{n}$. Then
for every $\varepsilon>0$ there exist positive numbers $a_{n+1},\ldots,a_{N}$
such that:

\begin{description}
\item[(a)] $a_{n+1},\ldots,a_{N}\in\left(  0,\varepsilon\right)  ;$

\item[(b)] a polynomial
\[
F_{\varepsilon}\left(  s\right)  =f\left(  s\right)  +s^{n+1}\left(
a_{n+1}+a_{n+2}s+\ldots+a_{N}s^{N-n-1}\right)
\]
is stable.
\end{description}
\end{lemma}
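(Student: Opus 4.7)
The plan is to construct $F_\varepsilon$ by adding its terms one degree at a time, choosing each new coefficient small enough both to preserve Hurwitz stability and to stay below $\varepsilon$. Set $f_n := f$ and, for $k = n+1, \ldots, N$, assume inductively that a stable polynomial $f_{k-1}$ of degree $k-1$ has already been constructed; I would then pick $a_k \in (0, \varepsilon)$ so that
\[
f_k(s) := f_{k-1}(s) + a_k s^k
\]
remains stable (now of degree $k$). After $N-n$ such steps, $F_\varepsilon := f_N$ fulfils both (a) and (b).

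The whole proof thus reduces to the following one-step claim: \emph{if $p$ is Hurwitz stable of degree $k-1$, then $p + a s^k$ is stable of degree $k$ for every sufficiently small $a > 0$.} I would prove this via continuity of roots. Let $c > 0$ denote the leading coefficient of $p$. As $a \to 0^+$, $k-1$ of the $k$ roots of $p + a s^k$ converge to the roots of $p$, which all lie in the open left half-plane $\{\mathfrak{Re}(s) < 0\}$; openness of this region keeps them inside it for small $a$. The remaining root $s(a)$ escapes to infinity, and balancing the two top-degree terms $a s^k \approx -c s^{k-1}$ gives $s(a) \sim -c/a \to -\infty$ along the negative real axis, so it too lies in $\{\mathfrak{Re}(s) < 0\}$. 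Consequently $p + a s^k$ is stable for all sufficiently small $a > 0$, and in particular such an $a$ may be chosen inside $(0, \varepsilon)$.

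The main technical obstacle is making the ``root at infinity'' step rigorous. A clean route is Rouch\'e's theorem on a half-disk $\{\mathfrak{Re}(s) \geq 0,\, |s| \leq R_a\}$ with $R_a \to \infty$ as $a \to 0^+$: on the semicircular arc the monomial $a s^k$ dominates $p$, while on the imaginary axis the strict positivity of $|p(i\omega)|$ (which follows from the stability of $p$) together with the smallness of $a$ prevents cancellation. This excludes any zero of $p + a s^k$ from the closed right half-plane and confirms stability in degree $k$. Once the one-step claim is established, the finite induction over $k = n+1, \ldots, N$ finishes the argument with no further difficulty, and the bound $a_k < \varepsilon$ is automatic from the way the $a_k$ are chosen.
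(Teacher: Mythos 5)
Your overall strategy coincides with the paper's: a finite induction that appends one coefficient (one degree) at a time, reducing everything to the one-step claim that a stable polynomial $p$ of degree $k-1$ stays stable after adding $as^{k}$ for all sufficiently small $a>0$. This is exactly Lemma~5.3 of Bhattacharyya \emph{et al.}, which the paper cites for the base case and reproves in its Appendix; the paper's proof of that step is different from yours and sidesteps the ``root at infinity'' issue entirely: the leading principal minors of the Hurwitz matrix of $p+\alpha s^{k}$ are continuous in $\alpha$, at $\alpha=0$ they equal the (positive) minors of $p$ times its leading coefficient, so they remain positive for small $\alpha>0$ and the Routh--Hurwitz criterion applies. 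That argument is purely algebraic and shorter.

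Your alternative proof of the one-step claim is correct in outline (the $k-1$ bounded roots converge into the open left half-plane, and the escaping root is asymptotic to $-c/a$), but the Rouch\'e implementation you propose does not work as stated: Rouch\'e's theorem requires a single comparison function dominating the perturbation on the \emph{entire} contour, whereas you let $p$ dominate on the imaginary segment and $as^{k}$ dominate on the arc. Verifying nonvanishing of $p+as^{k}$ on the two pieces of the boundary by two different mechanisms shows only that there are no zeros \emph{on} the boundary; it does not count the zeros inside the half-disk, which is the whole point. (And neither $p$ nor the pair of top-degree terms dominates on all of the boundary: $p$ fails on the arc, and $as^{k}+cs^{k-1}$ fails near $\omega=0$.) A clean repair within your framework is the rescaling $s=t/a$: multiplying $as^{k}+p(s)=0$ by $a^{k-1}$ gives $t^{k}+ct^{k-1}+O(a)=0$, whose roots converge to those of $t^{k-1}(t+c)$; the root near $t=-c$ gives $\mathfrak{Re}(s)\approx-c/a<0$, while the $k-1$ roots near $t=0$ are precisely the bounded roots converging to the zeros of $p$. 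Either supply that argument or adopt the paper's Hurwitz-minor continuity argument; with the one-step claim secured, your induction (and the bound $a_{k}<\varepsilon$ at every step) is fine.
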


\begin{proof}
Let us fix any $\varepsilon>0$. We will proceed by induction with respect to
$N-n$. For $N-n=1$ the result follows from Lemma 5.3 in Bhattacharyya
\textit{et al.} \cite{Bhat} (for the sake of completeness of this work, we
present in Appendix a proof of this fact based on the Routh--Hurwitz
criterion). Hence, let us assume that the result holds for $N-n=k$ and we need
to prove it for $N-n=k+1$.

By the induction assumption, there exist positive numbers $a_{n+1}%
,\ldots,a_{n+k}\in\left(  0,\varepsilon\right)  $ such that the polynomial%
\[
F_{\varepsilon,k}\left(  s\right)  =f\left(  s\right)  +a_{n+1}s^{n+1}%
+\ldots+a_{n+k}s^{n+k}%
\]
is stable. The argument used in the first induction step allows us to conclude
again that for the polynomial $F_{\varepsilon,k}$ we can find $a_{n+k+1}%
\in\left(  0,\varepsilon\right)  $ such that the polynomial%
\[
F_{\varepsilon,k+1}\left(  s\right)  =F_{\varepsilon,k}\left(  s\right)
+a_{n+k+1}s^{n+k+1}%
\]
is also stable, proving that the thesis is true for $N-n=k+1$. It means that
it is true for every $N>n$.
\end{proof}

\begin{lemma}
\label{lem2}Let $f\in\mathcal{H}_{n}$ and $k\in\mathbb{N}$. There exists a
sequence of polynomials $\left\{  p_{m}\right\}  _{m\in\mathbb{N}}%
\subset\mathbb{R}_{k-1}^{+}$ satisfying the following conditions:

\begin{description}
\item[(a)] for every $m\in\mathbb{N}$, the polynomial
\[
s\rightarrow p_{m}\left(  s\right)  +s^{k}f\left(  s\right)
\]
is stable;

\item[(b)] $\lim\limits_{m\rightarrow\infty}p_{m}=0$.
\end{description}
\end{lemma}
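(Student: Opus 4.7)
The plan is to reduce Lemma~\ref{lem2} to Lemma~\ref{lem1} via the reciprocal--polynomial trick discussed at the beginning of Section~2.2. Recall that a polynomial $F$ is stable if and only if $F^{\ast}$ is stable, and that $(\cdot)^{\ast}$ simply reverses the coefficient sequence. This allows us to move between a polynomial and its reciprocal while preserving stability.

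First I would write a candidate polynomial in the form
\[
F(s)=s^{k}f(s)+p(s),\qquad p(s)=c_{0}+c_{1}s+\ldots+c_{k-1}s^{k-1},
\]
with $c_{0},\ldots,c_{k-1}>0$ still to be determined. A direct computation of the reciprocal polynomial gives
\[
F^{\ast}(s)=f^{\ast}(s)+c_{k-1}s^{n+1}+c_{k-2}s^{n+2}+\ldots+c_{0}s^{n+k}.
\]
In other words, adding a degree $\leq k-1$ \textit{tail} to $s^{k}f(s)$ is exactly the same, after reciprocation, as adding $k$ new higher-order terms to $f^{\ast}$. Since $f\in\mathcal{H}_{n}$, we also have $f^{\ast}\in\mathcal{H}_{n}$, so Lemma~\ref{lem1} applies to $f^{\ast}$.

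Now I would fix $\varepsilon=1/m$ and invoke Lemma~\ref{lem1} with $N=n+k$: this produces positive numbers $a_{n+1}^{(m)},\ldots,a_{n+k}^{(m)}\in(0,1/m)$ such that $f^{\ast}(s)+\sum_{i=1}^{k}a_{n+i}^{(m)}s^{n+i}$ is stable. Setting $c_{k-i}^{(m)}:=a_{n+i}^{(m)}$ for $i=1,\ldots,k$ and defining
\[
p_{m}(s)=c_{0}^{(m)}+c_{1}^{(m)}s+\ldots+c_{k-1}^{(m)}s^{k-1},
\]
yields $p_{m}\in\mathbb{R}_{k-1}^{+}$ (all coefficients are strictly positive, so the degree is exactly $k-1$), and by the stability of the corresponding reciprocal, $p_{m}(s)+s^{k}f(s)$ is stable, giving (a). Condition (b) is immediate since every coefficient of $p_{m}$ lies in $(0,1/m)$, so $p_{m}\to 0$ as $m\to\infty$.

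There is no real obstacle here once one notices the reciprocation trick: Lemma~\ref{lem1} is tailored to perturbations in the \emph{leading} coefficients, whereas Lemma~\ref{lem2} asks for perturbations in the \emph{trailing} coefficients, and $f\leftrightarrow f^{\ast}$ interchanges the two roles while preserving stability. The only thing to keep an eye on is ensuring $c_{k-1}^{(m)}>0$ (so $\deg p_{m}=k-1$), but this is automatic from the strict positivity guaranteed by Lemma~\ref{lem1}(a).
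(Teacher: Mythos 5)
Your proposal is correct and follows essentially the same route as the paper: apply Lemma~\ref{lem1} to $f^{\ast}$ with $N=n+k$ and $\varepsilon=1/m$, then reciprocate back, reading off the coefficients of $p_{m}$ in reverse order from the appended higher-order terms. The coefficient bookkeeping and the conclusion that $p_{m}\to 0$ match the paper's argument exactly.
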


\begin{proof}
We have noticed before, that if the polynomial $f$ is stable, then so is
$f^{\ast}$. When applying Lemma \ref{lem1} to $f^{\ast}$ we get that for every
$N>n$ and for every $\varepsilon>0$ there exist positive numbers
$a_{n+1},\ldots,a_{N}\in\left(  0,\varepsilon\right)  $ such that the
polynomial%
\[
F_{\varepsilon}\left(  s\right)  =f^{\ast}\left(  s\right)  +s^{n+1}\left(
a_{n+1}+\ldots+a_{N}s^{N-n-1}\right)
\]
is stable. The stability of $F_{\varepsilon}$ is equivalent to the stability
of $F_{\varepsilon}^{\ast}$, i.e.
\[
F_{\varepsilon}^{\ast}\left(  s\right)  =a_{N}+a_{N-1}s+\ldots+a_{n+1}%
s^{N-n-1}+s^{N-n}f\left(  s\right)  .
\]
Letting $k=N-n$, $\varepsilon=\frac{1}{m}$ (for $m\in\mathbb{N}$) and
\[
p_{m}\left(  s\right)  =b_{0}^{\left(  m\right)  }+b_{1}^{\left(  m\right)
}s+\ldots+b_{k-1}^{\left(  m\right)  }s^{k-1},
\]
where $b_{i}^{\left(  m\right)  }=a_{N-i}$ (for $i=0,\ldots,N-n-1$), the
result follows from Lemma~\ref{lem1} by applying it to every $m\in\mathbb{N}$.
\end{proof}

From Lemma~\ref{lem2} one can easily draw the following conclusion.

\begin{conclusion}
\label{con1}If $f\in\mathcal{H}_{n}$, then for every $k\in\mathbb{N}$ there
exists a polynomial $p\in\mathbb{R}_{k-1}^{+}$ such that the polynomial
$s\rightarrow p\left(  s\right)  +s^{k}f\left(  s\right)  $ is stable.
\end{conclusion}

\noindent\textbf{Proof of Theorem \ref{th2}} In order to prove the first part
of the theorem, note that the stability of $F_{0}$ follows from
the~Garloff--Wagner~theorem. Similarly, the stability of $F_{n-m}$ is a
consequence of the identity%
\[
F_{n-m}=\left(  f^{\ast}\circ g^{\ast}\right)  ^{\ast}\text{,}%
\]
and again of the Garloff--Wagner theorem.

To prove the second part, let $f\bullet g=\left\{  F_{0},\ldots,F_{n-m}%
\right\}  ,$ where%
\[
F_{j}\left(  s\right)  =a_{j}b_{0}+a_{j+1}b_{1}s+\ldots+a_{j+m}b_{m}%
s^{m}\text{,}%
\]
for $j=0,\ldots,n-m$. Since we have just shown the stability of $F_{0}$ and
$F_{n-m},$ let us fix any $j\in\left\{  1,\ldots,n-m-1\right\}  $. The
stability of $g$ and Lemma~\ref{lem2} imply that there exists a sequence of
polynomials $\left\{  p_{m}\right\}  _{m\in\mathbb{N}}\subset\mathbb{R}%
_{j-1}^{+}$ such that the polynomials%
\[
G_{m}\left(  s\right)  =p_{m}\left(  s\right)  +s^{j}g\left(  s\right)
\text{,\quad}m\in\mathbb{N}%
\]
are stable. By the Garloff--Wagner theorem, the stability of $G_{m}$ implies
the stability of $G_{m}\circ f$. Since $p_{m}\rightarrow0$ (as $m\rightarrow
\infty)$, we obtain that $G_{m}\circ f\rightarrow G$ (as $m\rightarrow\infty
$), where $G\left(  s\right)  =s^{j}F_{j}\left(  s\right)  $. By the
continuous dependence of zeros of a polynomial on its coefficients, we obtain
the quasi--stability of the polynomial $F_{j}$. This completes the proof.

\subsection{Further extensions}

\bigskip In this subsection we shall give sufficient conditions for the
stability of the generalized Hadamard product $f\bullet g$, supposing that
polynomials $f$ and $g$ satisfy some additional, more or less restrictive, conditions.

\subsubsection{Polynomials admitting a Hadamard factorization}

\bigskip Recall that the polynomial $f\in\mathcal{H}_{n}$ admits a Hadamard
factorization if there exist two polynomials $f_{1},f_{2}\in\mathcal{H}_{n}$
for which $f=f_{1}\circ f_{2}$. It is easy to see that all stable polynomials
of degree $2$ and $3$ have Hadamard factorizations (see Garloff and
Shrinivasan \cite{Gar-Shr}), but it is also known that there exist polynomials
of degree $4$ that do not have a Hadamard factorization (again Garloff and
Shrinivasan \cite{Gar-Shr}). Giving some characterization of polynomials
admitting a Hadamard factorization in general case is, to the best of our
knowledge, an open problem, but some necessary conditions for the Hadamard
factorization of stable polynomials can be found in Loredo--Villalobos and
Aguirre--Hern\'{a}ndez \cite{LoV} (see also Remark~\ref{rem2} below for some
sufficient condition for the Hadamard factorization of a polynomial).

\begin{theorem}
\label{th3}Let $m\leq n$ be positive integers. If $f\in\mathcal{H}_{n}$ and
$g$ has a Hadamard factorization, i.e. $g=g_{1}\circ g_{2}$ for some
$g_{1},g_{2}\in\mathcal{H}_{m}$, then $f\bullet g\subset\mathcal{H}_{m}$.
\end{theorem}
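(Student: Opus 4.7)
My plan is to realize each $F_j$ as the \emph{last} element of an auxiliary generalized Hadamard product whose endpoint stability is furnished by Theorem~\ref{th2}(a); this bypasses the mere quasi-stability obstruction that one meets on applying Theorem~\ref{th2}(b) to $f$ and $g$ directly.

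Fix $j\in\{0,1,\ldots,n-m\}$. For $j\ge 1$ I would invoke Conclusion~\ref{con1} on $g_1\in\mathcal{H}_m$ with $k=j$ to produce a polynomial $p\in\mathbb{R}_{j-1}^+$ such that
\[
\hat{g}_1(s)=p(s)+s^j g_1(s)
\]
is Hurwitz stable of degree $j+m$; for $j=0$ I would simply take $\hat{g}_1=g_1$. Since $j+m\le n$, the Garloff--Wagner theorem then gives $f\circ\hat{g}_1\in\mathcal{H}_{j+m}$.

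Next I would apply Theorem~\ref{th2}(a) to the pair $(f\circ\hat{g}_1,\,g_2)$: in the generalized Hadamard product $(f\circ\hat{g}_1)\bullet g_2$ the element indexed by $(j+m)-m=j$ belongs to $\mathcal{H}_m$. A direct coefficient count shows that the coefficient of $s^i$ in $f\circ\hat{g}_1$ equals $a_i c_i$ for $0\le i\le j-1$ and $a_i b_{i-j}^{(1)}$ for $j\le i\le j+m$, so the $j$-th truncation $(f\circ\hat{g}_1)_j$ is exactly $f_j\circ g_1$. Hadamard-multiplying by $g_2$ and using $b_i=b_i^{(1)}b_i^{(2)}$ identifies this last element with $F_j$, yielding $F_j\in\mathcal{H}_m$. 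Since $j$ was arbitrary, $f\bullet g\subset\mathcal{H}_m$.

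The conceptual step is the embedding: Conclusion~\ref{con1} lets me lift $g_1$ to a stable polynomial $\hat{g}_1$ of degree $j+m$ whose \emph{top} $m+1$ coefficients are exactly those of $g_1$, which has the effect of moving $F_j$ from the interior of a generalized Hadamard product (where only quasi-stability is known from Theorem~\ref{th2}(b)) to its endpoint (where strict stability is guaranteed by Theorem~\ref{th2}(a)). The auxiliary coefficients of $p$ are absorbed into positions $0,\ldots,j-1$ of $f\circ\hat{g}_1$ and never interact with $F_j$; the Hadamard factorization hypothesis on $g$ is used only to supply the second stable factor $g_2$ after $g_1$ has been consumed by this embedding.
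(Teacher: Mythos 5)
Your proposal is correct and is essentially the paper's own argument: the paper likewise uses Conclusion~\ref{con1} to lift $g_1$ to a stable polynomial $G_j(s)=p(s)+s^jg_1(s)$ of degree $j+m\le n$, forms $G_j\circ f$ via Garloff--Wagner, and then extracts $F_j$ as $\left(\left(G_j\circ f\right)^{\ast}\circ g_2^{\ast}\right)^{\ast}$. Your packaging of that last step as ``$F_j$ is the terminal element of $(f\circ\hat{g}_1)\bullet g_2$, hence stable by Theorem~\ref{th2}(a)'' is just a restatement of the same reversal identity, so the two proofs coincide in substance.
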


\begin{proof}
Recall that $f\bullet g=\left\{  F_{0},\ldots,F_{n-m}\right\}  $. The
stability of $F_{0}$ and $F_{n-m}$ follows from Theorem \ref{th2}, and thus
let $j\in\left\{  1,\ldots,n-m-1\right\}  $. It follows from Conclusion
\ref{con1} that there exists a polynomial $p\in\mathbb{R}_{j-1}^{+}$, say
$p\left(  s\right)  =p_{0}+p_{1}s+\ldots+p_{j-1}s^{j-1}$, such that the
polynomial $G_{j}\left(  s\right)  =p\left(  s\right)  +s^{j}g_{1}\left(
s\right)  $ is stable. By the Garloff--Wagner theorem we get the stability of
$\left(  \left(  G_{j}\circ f\right)  ^{\ast}\circ g_{2}^{\ast}\right)
^{\ast}$. It suffices to note that $\left(  \left(  G_{j}\circ f\right)
^{\ast}\circ g_{2}^{\ast}\right)  ^{\ast}=F_{j}$.

Indeed, the polynomials $g_{1}$ and $g_{2}$ can be written as
\[
g_{1}\left(  s\right)  =\beta_{0}+\beta_{1}s+\ldots+\beta_{m-1}s^{m-1}%
+\beta_{m}s^{m}%
\]
and%
\[
g_{2}\left(  s\right)  =\frac{b_{0}}{\beta_{0}}+\frac{b_{1}}{\beta_{1}%
}s+\ldots+\frac{b_{m-1}}{\beta_{m-1}}s^{m-1}+\frac{b_{m}}{\beta_{m}}s^{m}%
\]
and thus%
\begin{align*}
\left(  G_{j}\circ f\right)  \left(  s\right)   &  =a_{0}p_{0}+\ldots
+a_{j-1}p_{j-1}s^{j-1}+\\
&  +s^{j}\left(  a_{j}\beta_{0}+a_{j+1}\beta_{1}s+\ldots+a_{j+m}\beta_{m}%
s^{m}\right)  ,
\end{align*}
or equivalently,%
\begin{align*}
\left(  G_{j}\circ f\right)  ^{\ast}\left(  s\right)   &  =a_{j+m}\beta
_{m}+a_{j+m-1}\beta_{m-1}s+\ldots+a_{j}\beta_{0}s^{m}+\\
&  +a_{j-1}p_{j-1}s^{m+1}\ldots+a_{0}p_{0}s^{j+m}\text{.}%
\end{align*}
Since%
\[
\left(  \left(  G_{j}\circ f\right)  ^{\ast}\circ g_{2}^{\ast}\right)  \left(
s\right)  =a_{j+m}b_{m}+a_{j+m-1}b_{m-1}s+\ldots+a_{j}b_{0}s^{m},
\]
then $\left(  \left(  G_{j}\circ f\right)  ^{\ast}\circ g_{2}^{\ast}\right)
^{\ast}\left(  s\right)  =\left(  f_{j}\circ g\right)  \left(  s\right)  $
where $f_{j}$ are as in (\ref{w22}). This completes the proof.
\end{proof}

\subsubsection{Polynomials from $\mathcal{W}_{n}^{\alpha}$}

\bigskip One can show that for every positive number $\alpha$ the set
$\mathcal{W}_{n}^{\alpha}$ is non-empty (see Lemma \ref{lem3} below).
Moreover, Katkova and Vishnyakova proved in \cite{Kat-Vis} that a real
polynomial (\ref{w1}) of degree $4$ or more and satisfying inequalities%
\[
\alpha^{\ast}a_{i}a_{i-1}-a_{i-2}a_{i+1}>0\text{\quad}\left(  i=2,\ldots
,n-1\right)  ,
\]
where $\alpha^{\ast}\approx0.46557$ is the unique real solution to the
equation
\begin{equation}
1=\alpha\left(  1+\alpha\right)  ^{2}, \label{w11a}%
\end{equation}
is stable. This observation together with our earlier considerations in
Subsection~\ref{Subsec3} lead to the inclusion%
\begin{equation}
\mathcal{W}_{n}^{\alpha^{\ast}}\subset\mathcal{H}_{n} \label{w12}%
\end{equation}
being true for $n\geq3$. For simplicity, as in case of $\mathcal{W}_{n}$, we
put by definition $\mathcal{W}_{k}^{\alpha}=\mathcal{H}_{k}$ for every
$\alpha\in\left(  0,1\right)  $ and $k=1,2$.

Let us also note that for $\beta^{\ast}=\sqrt{\alpha^{\ast}}\approx0.68233$ we
have%
\[
\mathcal{W}_{n}^{\alpha^{\ast}}\subset\mathcal{W}_{n}^{\beta^{\ast}}%
\]
but, as follows from Example 4, in contrary to $\mathcal{W}_{n}^{\alpha^{\ast
}}$, $\mathcal{W}_{n}^{\beta^{\ast}}$ contains unstable polynomials.

The following theorem states, among others, a sufficient condition for the
stability of the generalized Hadamard product $f\bullet g$ in the case when
neither $f$ nor $g$ is stable.

\begin{theorem}
\label{th4}Let $m\leq n$ be positive integers, let $\alpha^{\ast}$ be the
unique real solution to the equation (\ref{w11a}) and let $\beta^{\ast}%
=\sqrt{\alpha^{\ast}}$. Then

\begin{description}
\item[(a)] if $f\in\mathcal{W}_{n}$ and $g\in\mathcal{W}_{m}^{\alpha^{\ast}}$,
then $f\bullet g\subset\mathcal{W}_{m}^{\alpha^{\ast}}$;

\item[(b)] if $f\in\mathcal{W}_{n}^{\beta^{\ast}}$ and $g\in\mathcal{W}%
_{m}^{\beta^{\ast}}$, then $f\bullet g\subset\mathcal{W}_{m}^{\alpha^{\ast}}$.
\end{description}
\end{theorem}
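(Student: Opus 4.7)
My plan is to reduce both parts of Theorem~\ref{th4} to a single multiplicative identity for the ratios $\lambda_i(\cdot)$, which splits cleanly into an $f$--factor and a $g$--factor. Writing out the coefficients of $F_j\in f\bullet g$ as $a_{j+k}b_k$ for $k=0,\ldots,m$, a direct computation yields, for every admissible index $i\in\{2,\ldots,m-1\}$ and every $j\in\{0,\ldots,n-m\}$,
\[
\lambda_i(F_j)=\frac{(a_{j+i-2}b_{i-2})(a_{j+i+1}b_{i+1})}{(a_{j+i-1}b_{i-1})(a_{j+i}b_i)}=\frac{a_{j+i-2}a_{j+i+1}}{a_{j+i-1}a_{j+i}}\cdot\frac{b_{i-2}b_{i+1}}{b_{i-1}b_i}=\lambda_{i+j}(f)\cdot\lambda_i(g).
\]
Since $i\ge 2$ and $i+j\le (m-1)+(n-m)=n-1$, the index $i+j$ falls into the range $\{2,\ldots,n-1\}$ where $\lambda_{i+j}(f)$ is defined, so the identity is legitimate.

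With this identity in hand both parts follow by a one-line estimate. For \textbf{(a)}, the hypothesis $f\in\mathcal{W}_n$ gives $\lambda_{i+j}(f)<1$ and $g\in\mathcal{W}_m^{\alpha^\ast}$ gives $\lambda_i(g)<\alpha^\ast$, so $\lambda_i(F_j)<\alpha^\ast$ and therefore $F_j\in\mathcal{W}_m^{\alpha^\ast}$. For \textbf{(b)}, both factors are strictly smaller than $\beta^\ast$, so $\lambda_i(F_j)<(\beta^\ast)^2=\alpha^\ast$ by the very definition of $\beta^\ast$, whence $F_j\in\mathcal{W}_m^{\alpha^\ast}$ again. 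Since $j$ was arbitrary, the inclusion $f\bullet g\subset\mathcal{W}_m^{\alpha^\ast}$ follows in both cases.

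The only bookkeeping concerns the degenerate cases $m\in\{1,2\}$, in which the set $\mathcal{W}_m^{\alpha^\ast}$ is defined to equal $\mathcal{H}_m=\mathbb{R}_m^+$; here there are no ratios $\lambda_i$ to control and the conclusion $F_j\in\mathbb{R}_m^+$ is automatic from positivity of the coefficients of $f$ and $g$. I would mention this at the start and then focus on $m\ge 3$.

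There is essentially no obstacle: the substantive content of the theorem is the product rule $\lambda_i(F_j)=\lambda_{i+j}(f)\lambda_i(g)$ and the algebraic fact $(\beta^\ast)^2=\alpha^\ast$. Once the identity is written down, the proof is a routine inequality, and the surrounding machinery (the Katkova--Vishnyakova inclusion (\ref{w12}) implying stability of the product) is not needed for the statement of Theorem~\ref{th4} itself, only for its interpretation.
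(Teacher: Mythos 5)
Your proof is correct and follows essentially the same route as the paper: the paper's proof consists precisely of the identity $\lambda_i(F_j)=\lambda_{i+j}(f)\lambda_i(g)$ (its equation (\ref{ww3})) followed by the same one-line estimates for cases (a) and (b). Your additional remarks on the index range of $i+j$ and the degenerate cases $m\in\{1,2\}$ are harmless refinements the paper leaves implicit.
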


\begin{proof}
We have $f\bullet g=\left\{  F_{0},\ldots,F_{n-m}\right\}  $ where%
\[
F_{j}\left(  s\right)  =A_{0,j}+A_{1,j}s+\ldots+A_{m,j}s^{m}%
\]
with $A_{i,j}=a_{j+i}b_{i}$ for $i=0,\ldots,m,$ $j=0,\ldots,n-m$. Then, in
view of (\ref{w101}), we have for $i=2,\ldots,m-1$:
\begin{equation}
\lambda_{i}\left(  F_{j}\right)  =\frac{A_{i-2,j}A_{i+1,j}}{A_{i,j}A_{i-1,j}%
}=\frac{a_{j+i-2}a_{j+i+1}}{a_{j+i}a_{j+i-1}}\frac{b_{i-2}b_{i+1}}%
{b_{i}b_{i-1}}=\lambda_{i+j}\left(  f\right)  \lambda_{i}\left(  g\right)
\label{ww3}%
\end{equation}
what, in both cases (a) and (b), by the assumptions on $f$ and $g$, leads to
an inequality
\[
\lambda_{i}\left(  F_{j}\right)  <\alpha^{\ast}%
\]
that completes the proof.
\end{proof}

\subsubsection{Polynomials from $\mathcal{V}_{n}$}

\bigskip As regards the set $\mathcal{V}_{n}$, it is clear that $\mathcal{V}%
_{3}=\mathcal{W}_{3}=\mathcal{H}_{3}$. As previously, we put by definition
$\mathcal{V}_{k}=\mathcal{H}_{k}$ for $k=1,2$. It can be also shown that
$\mathcal{V}_{4}=\mathcal{H}_{4}$ (see Proposition 7 in Bia\l as and
Bia\l as--Cie\.{z} \cite{Bia-Cie}) and for $n\geq5$ it holds%
\[
\mathcal{V}_{n}\subsetneq\mathcal{W}_{n}\text{.}%
\]
Moreover, for $n=3$ and $n=4$ we have $\mathcal{W}_{n}^{\alpha^{\ast}}%
\subset\mathcal{V}_{n}$, but in the general case there is no inclusion between
$\mathcal{W}_{n}^{\alpha^{\ast}}\ $and $\mathcal{V}_{n}$. Kleptsyn \cite{Kle}
proved, in turn, that for $n\geq3$ a sufficient condition for the stability of
$f$ is%
\[
\lambda_{2}\left(  f\right)  +\ldots+\lambda_{n-1}\left(  f\right)  <1,
\]
where $\lambda_{2}\left(  f\right)  ,\ldots,\lambda_{n-1}\left(  f\right)  $
are given by (\ref{w101}). In other words, for $n\geq3$ we have
\[
\mathcal{V}_{n}\subset\mathcal{H}_{n}.
\]
It allows us to give one more sufficient condition for the stability of the
generalized Hadamard product of two polynomials.

\begin{theorem}
\label{th5}Let $m\leq n$ be positive integers. If $f\in\mathcal{W}_{n}$ and
$g\in\mathcal{V}_{m}$, then $f\bullet g\subset\mathcal{V}_{m}$.
\end{theorem}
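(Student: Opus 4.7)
The plan is to exploit the multiplicative identity
\[
\lambda_{i}(F_{j})=\lambda_{i+j}(f)\,\lambda_{i}(g)
\qquad (i=2,\ldots,m-1)
\]
already established in equation~(\ref{ww3}) during the proof of Theorem~\ref{th4}. This identity holds for every $F_j\in f\bullet g$ with $j\in\{0,\ldots,n-m\}$, and the indices are legitimate because $2\le i\le m-1$ and $0\le j\le n-m$ give $2\le i+j\le n-1$, so $\lambda_{i+j}(f)$ is one of the quantities constrained by the hypothesis $f\in\mathcal{W}_n$.

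The case $m\le 2$ is immediate from the convention $\mathcal{V}_{k}=\mathcal{H}_{k}$ for $k=1,2$, since any polynomial with positive coefficients of degree $\le 2$ is stable; for $m=3$ the assertion reduces to $\mathcal{W}_3=\mathcal{V}_3=\mathcal{H}_3$. The substantive case is $m\ge 4$. There, I would fix an arbitrary $j\in\{0,\ldots,n-m\}$ and sum the identity over $i=2,\ldots,m-1$:
\[
\sum_{i=2}^{m-1}\lambda_{i}(F_{j})
=\sum_{i=2}^{m-1}\lambda_{i+j}(f)\,\lambda_{i}(g).
\]
Since $f\in\mathcal{W}_n$ gives $\lambda_{i+j}(f)<1$ for every admissible pair $(i,j)$, and all $\lambda_i(g)$ are positive, the right-hand side is strictly bounded above by $\sum_{i=2}^{m-1}\lambda_i(g)$, which is itself strictly less than $1$ by the hypothesis $g\in\mathcal{V}_m$. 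This yields $\sum_{i=2}^{m-1}\lambda_i(F_j)<1$ and hence $F_j\in\mathcal{V}_m$.

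There is essentially no obstacle here: the theorem is a direct, one-line corollary of (\ref{ww3}) once one notices that the factor $\lambda_{i+j}(f)$ is uniformly $<1$ under the hypothesis $f\in\mathcal{W}_n$, so that summing in $i$ inherits the bound from $g\in\mathcal{V}_m$ rather than from $g\in\mathcal{W}_m^{\alpha^*}$ as in Theorem~\ref{th4}. The only subtlety worth verifying explicitly is the admissibility of the indices $i+j$, i.e.\ that we never call on $\lambda_k(f)$ for $k\notin\{2,\ldots,n-1\}$, and the degenerate range $m\le 3$, both of which are routine.
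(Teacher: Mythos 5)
Your proposal is correct and follows exactly the paper's own argument: sum the identity $\lambda_i(F_j)=\lambda_{i+j}(f)\lambda_i(g)$ from (\ref{ww3}) over $i$, use $\lambda_{i+j}(f)<1$ from $f\in\mathcal{W}_n$, and inherit the bound $\sum_{i=2}^{m-1}\lambda_i(g)<1$ from $g\in\mathcal{V}_m$. Your explicit treatment of the index ranges and the degenerate cases $m\le 3$ is a minor addition the paper leaves implicit.
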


\begin{proof}
The proof is similar to that of Theorem \ref{th4} and follows from the easily
verified identities%
\[
\sum_{i=2}^{m-1}\lambda_{i}\left(  F_{j}\right)  =\sum_{i=2}^{m-1}%
\lambda_{i+j}\left(  f\right)  \lambda_{i}\left(  g\right)  <\sum_{i=2}%
^{m-1}\lambda_{i}\left(  g\right)  <1
\]
which hold for $j=0,\ldots,n-m$.
\end{proof}

\subsection{Stabilization by the Hadamard product}

\bigskip We will prove now one more property of the generalized Hadamard
product of polynomials. Namely, we will show that for every polynomial $f$
there exists a stable polynomial $g$ for which the generalized Hadamard
product $f\bullet g$ becomes stable.

\begin{theorem}
\label{th7}Suppose that $f\in\mathbb{R}_{n}^{+}$. Then for every $m\in\left\{
1,\ldots,n\right\}  $ there exists a polynomial $g\in\mathcal{H}_{m}$ such
that $f\bullet g\subset\mathcal{H}_{m}$.
\end{theorem}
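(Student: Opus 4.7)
The plan is to choose $g$ so that the ratios $\lambda_i(g)$ are so small that every element of $f \bullet g$ falls into the Katkova--Vishnyakova stability zone $\mathcal{W}_m^{\alpha^*}\subset\mathcal{H}_m$, regardless of whether $f$ itself is stable. Since $f$ is assumed only to have positive coefficients, I cannot lean on any stability of $f$, but I can keep the contribution of $f$ to the ratios of each $F_j$ uniformly bounded. The cases $m=1$ and $m=2$ would be disposed of first, and are immediate: since $\mathcal{H}_1=\mathbb{R}_1^+$ and $\mathcal{H}_2=\mathbb{R}_2^+$, any $g\in\mathcal{H}_m$ produces elements of $f\bullet g$ with strictly positive coefficients, hence stable. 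So assume $m\geq 3$.

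The heart of the argument is the identity already derived in the proof of Theorem~\ref{th4},
$$\lambda_i(F_j) \;=\; \lambda_{i+j}(f)\,\lambda_i(g), \qquad i=2,\ldots,m-1,\ \ j=0,\ldots,n-m.$$
Set $M=\max\{\lambda_k(f):k=2,\ldots,n-1\}$, which is a well-defined positive number. I would pick any $\alpha$ with $0<\alpha<\alpha^{\ast}/\max(1,M)$, and then choose $g\in\mathcal{W}_m^{\alpha}$, which is non-empty by Lemma~\ref{lem3}. The inclusion (\ref{w12}) gives $g\in\mathcal{W}_m^{\alpha^{\ast}}\subset\mathcal{H}_m$, so $g$ is stable, and moreover $\lambda_i(F_j)\leq M\alpha<\alpha^{\ast}$ for all admissible $i,j$, so each $F_j\in\mathcal{W}_m^{\alpha^{\ast}}\subset\mathcal{H}_m$ by the same inclusion. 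Hence $f\bullet g\subset\mathcal{H}_m$.

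The only ingredient that still needs checking is the non-emptiness of $\mathcal{W}_m^{\alpha}$ for arbitrarily small $\alpha>0$, which is the content of Lemma~\ref{lem3}. This is transparent from an explicit construction: for $t>0$, the polynomial
$$g_t(s)\;=\;\sum_{i=0}^{m} t^{\,i(i-1)/2}\, s^{i}$$
satisfies $\lambda_i(g_t)=t^{2}$ for every $i=2,\ldots,m-1$, by a one-line computation using $\binom{i-2}{2}+\binom{i+1}{2}-\binom{i}{2}-\binom{i-1}{2}=2$. Hence $g_t\in\mathcal{W}_m^{\alpha}$ as soon as $t^{2}<\alpha$, and one can set $\alpha<\alpha^{\ast}/\max(1,M)$ as above. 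I do not expect any genuine obstacle here; the only nuance is recognizing that the multiplicative identity from Theorem~\ref{th4} is purely algebraic and so applies even when $f$ is merely assumed to have positive coefficients, which is precisely the situation where Theorem~\ref{th7} is useful.
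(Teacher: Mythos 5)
Your proof is correct and follows essentially the same route as the paper's: the multiplicative identity $\lambda_i(F_j)=\lambda_{i+j}(f)\,\lambda_i(g)$ from (\ref{ww3}), Lemma~\ref{lem3} to manufacture a $g$ with uniformly small ratios, and the inclusion $\mathcal{W}_m^{\alpha^{\ast}}\subset\mathcal{H}_m$. Your explicit $g_t$ is just a closed form of the paper's recursive construction, and your cap $\alpha<\alpha^{\ast}/\max(1,M)$ tidies a point the paper leaves implicit (one also needs $\varepsilon<\alpha^{\ast}$ so that $g$ itself lands in $\mathcal{W}_m^{\alpha^{\ast}}$ and is therefore stable).
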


The proof is based on the following observation.

\begin{lemma}
\label{lem3}For every integer $m\geq3$ and for every $\varepsilon>0$ there
exists a polynomial $g\in\mathbb{R}_{m}^{+}$ such that $\lambda_{i}\left(
g\right)  =\varepsilon$, for $i=2,\ldots,m-1$.
\end{lemma}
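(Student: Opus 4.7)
The plan is to rewrite the $m-2$ conditions $\lambda_{i}(g)=\varepsilon$ as a triangular recursion on the coefficients of $g$ and to exhibit an explicit positive solution. Writing $g(s)=b_{0}+b_{1}s+\ldots+b_{m}s^{m}$ and recalling that $\lambda_{i}(g)=b_{i-2}b_{i+1}/(b_{i}b_{i-1})$ involves four consecutive coefficients, each equation $\lambda_{i}(g)=\varepsilon$ (for $i=2,\ldots,m-1$) is equivalent to
\[
b_{i+1}=\varepsilon\cdot\frac{b_{i}b_{i-1}}{b_{i-2}},
\]
so once $b_{0},b_{1},b_{2}$ are chosen, the coefficients $b_{3},b_{4},\ldots,b_{m}$ are uniquely determined by this forward recurrence.

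Then I would simply fix $b_{0}=b_{1}=b_{2}=1$ (any positive triple would do) and define $b_{3},\ldots,b_{m}$ successively via the above formula. A one-line induction shows that $b_{i}>0$ for every $i\leq m$: indeed, the inductive step expresses $b_{i+1}$ as a product of $\varepsilon>0$ and positive previous coefficients divided by a positive previous coefficient. The resulting polynomial $g$ therefore lies in $\mathbb{R}_{m}^{+}$, and by construction it satisfies $\lambda_{i}(g)=\varepsilon$ for all $i=2,\ldots,m-1$.

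There is no substantial obstacle in this argument: the lemma amounts to the fact that the defining system is upper-triangular in the unknowns $b_{3},\ldots,b_{m}$, with three free positive parameters to choose first. If a concrete formula is desired, one can set $b_{i}=\varepsilon^{c_{i}}$, in which case the multiplicative recurrence becomes the linear recurrence $c_{i+1}=1+c_{i}+c_{i-1}-c_{i-2}$ with $c_{0}=c_{1}=c_{2}=0$, yielding $c_{3}=1,\,c_{4}=2,\,c_{5}=4,\,c_{6}=6,\ldots$; but such an explicit form is not needed for the statement.
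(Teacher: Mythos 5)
Your proof is correct and is essentially identical to the paper's: the paper also defines $b_0,b_1,b_2$ as arbitrary positive numbers and sets $b_{k+2}=\varepsilon\,b_{k+1}b_k/b_{k-1}$, which is your recurrence up to reindexing. The extra explicit formula $b_i=\varepsilon^{c_i}$ is a harmless (and correct) addition not present in the paper.
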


\begin{proof}
It is easy to see that the polynomial $g\left(  s\right)  =b_{0}+b_{1}%
s+\ldots+b_{m}s^{m}$ with coefficients given by the following recurrence
formulae: $b_{0},b_{1},b_{2}$ -- arbitrary positive numbers and
\[
b_{k+2}=\varepsilon\frac{b_{k+1}b_{k}}{b_{k-1}}\text{,\quad for }%
k=1,\ldots,m-2
\]
satisfies our requirement.
\end{proof}

\noindent\textbf{Proof of Theorem \ref{th7} }Suppose that $n\geq3$ (for $1\leq
n\leq2$ the result is not interesting) and fix any $m\in\left\{
3,\ldots,n\right\}  $. Since $f\bullet g=\left\{  F_{0},\ldots,F_{n-m}%
\right\}  $ where, according to (\ref{ww3}),
\[
\lambda_{i}\left(  F_{j}\right)  =\lambda_{i+j}\left(  f\right)  \lambda
_{i}\left(  g\right)  \text{,}%
\]
the result follows from Lemma \ref{lem3}\textbf{ }by applying it to any
$\varepsilon<\frac{\alpha^{\ast}}{\max_{2\leq i\leq n-1}\lambda_{i}\left(
f\right)  }$.

\bigskip It seems to be interesting that if $f\in\mathcal{W}_{n}$ (it is still
not necessarily stable), then the polynomial $g$ that occurs in Theorem
\ref{th7} can be chosen as the one having a Hadamard factorization.

\begin{theorem}
\label{th6}Suppose that $f\in\mathcal{W}_{n}$. Then for every $m\in\left\{
1,\ldots,n\right\}  $ there exists a polynomial $g\in\mathcal{H}_{m}$ having a
Hadamard factorization and such that $f\bullet g\subset\mathcal{H}_{m}$.
\end{theorem}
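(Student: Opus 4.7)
The plan is to exploit the fact that Theorem~\ref{th7} already constructs a stabilising $g$ as the one produced by Lemma~\ref{lem3} (a polynomial with every $\lambda_{i}(g)$ equal to a prescribed small number). I intend to build such a $g$ as a Hadamard product $g = g_{1}\circ g_{2}$ of two stable polynomials of degree $m$. The key algebraic observation driving everything is the multiplicativity of the $\lambda_{i}$'s under the Hadamard product: for any $g_{1},g_{2}\in\mathbb{R}_{m}^{+}$,
\[
\lambda_{i}(g_{1}\circ g_{2}) \;=\; \lambda_{i}(g_{1})\,\lambda_{i}(g_{2}),\qquad i=2,\ldots,m-1,
\]
which is verified directly from the definition (\ref{w101}) (the $\beta_{i}$'s cancel exactly as in the computation (\ref{ww3}) carried out in the proof of Theorem~\ref{th4}).

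The cases $m\in\{1,2\}$ are handled first: here $\mathcal{H}_{m}=\mathbb{R}_{m}^{+}$, every element admits a Hadamard factorisation (just split each coefficient as a product of two positives), and the polynomial produced in the proof of Theorem~\ref{th7} is already of this form, so nothing further is needed. For $m=3$ the situation is similar because, by the remark in Subsection~3.2.1, every polynomial in $\mathcal{H}_{3}$ admits a Hadamard factorisation, so the $g$ from Theorem~\ref{th7} works as it stands. The substance of the argument is therefore the case $m\geq 3$, which I treat uniformly.

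For $m\geq 3$ I proceed as follows. Set
\[
M \;=\; \max_{2\leq k\leq n-1}\lambda_{k}(f),
\]
and pick a positive $\delta$ small enough that simultaneously
\[
\delta<\alpha^{\ast} \qquad\text{and}\qquad \delta^{2} M < \alpha^{\ast}.
\]
By Lemma~\ref{lem3} (applied twice, with $\varepsilon=\delta$) there exist polynomials $g_{1},g_{2}\in\mathbb{R}_{m}^{+}$ with $\lambda_{i}(g_{1})=\lambda_{i}(g_{2})=\delta$ for every $i\in\{2,\ldots,m-1\}$. The first inequality on $\delta$ gives $g_{1},g_{2}\in\mathcal{W}_{m}^{\alpha^{\ast}}$, hence by the Katkova--Vishnyakova inclusion (\ref{w12}) both $g_{1}$ and $g_{2}$ are in $\mathcal{H}_{m}$. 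Define
\[
g \;=\; g_{1}\circ g_{2}.
\]
By construction $g$ admits a Hadamard factorisation, and by the Garloff--Wagner theorem $g\in\mathcal{H}_{m}$. Moreover, by the multiplicativity observation above, $\lambda_{i}(g)=\delta^{2}$ for every $i$.

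It remains to check that $f\bullet g\subset\mathcal{H}_{m}$. Writing $f\bullet g=\{F_{0},\ldots,F_{n-m}\}$ and using the identity (\ref{ww3}) already recorded in the proof of Theorem~\ref{th4},
\[
\lambda_{i}(F_{j}) \;=\; \lambda_{i+j}(f)\,\lambda_{i}(g) \;=\; \delta^{2}\,\lambda_{i+j}(f) \;\leq\; \delta^{2} M \;<\; \alpha^{\ast}
\]
for all admissible $i,j$, so each $F_{j}\in\mathcal{W}_{m}^{\alpha^{\ast}}\subset\mathcal{H}_{m}$, which completes the proof. The only step requiring care is the simultaneous choice of $\delta$, and it is not really an obstacle: once the identity $\lambda_{i}(g_{1}\circ g_{2})=\lambda_{i}(g_{1})\lambda_{i}(g_{2})$ is in hand and Lemma~\ref{lem3} is available, the argument is just bookkeeping on the single parameter $\delta$.
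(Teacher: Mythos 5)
Your proof is correct, but it takes a genuinely different route from the paper's. The paper constructs $g$ from $f$ itself: it invokes the Bia\l as--Bia\l as-Cie\.{z} result on Hadamard powers to find $p$ large enough that every segment $f_{j}^{[p]}$ is stable, sets $g=f_{0}^{[p]}\circ\cdots\circ f_{n-m}^{[p]}$, and then observes that $g\circ f_{j}=f_{j}^{[p+1]}\circ\prod_{i\neq j}f_{i}^{[p]}$ is stable by Garloff--Wagner; the hypothesis $f\in\mathcal{W}_{n}$ is what makes the Hadamard-power theorem applicable to each $f_{j}$. You instead recycle the machinery of Theorem~\ref{th7}: Lemma~\ref{lem3} produces $g_{1},g_{2}$ with all $\lambda_{i}$ equal to a small $\delta$, the multiplicativity $\lambda_{i}(g_{1}\circ g_{2})=\lambda_{i}(g_{1})\lambda_{i}(g_{2})$ (a correct and easily checked identity, parallel to (\ref{ww3})) gives $\lambda_{i}(g)=\delta^{2}$, and the inclusion $\mathcal{W}_{m}^{\alpha^{\ast}}\subset\mathcal{H}_{m}$ closes the argument. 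Each approach buys something: yours is self-contained within the paper's own lemmas, avoids the external Hadamard-power theorem, and in fact proves a \emph{stronger} statement --- since $\delta$ can be taken arbitrarily small relative to $M=\max_{k}\lambda_{k}(f)$, you never use $M<1$, so your argument upgrades Theorem~\ref{th7} itself (any $f\in\mathbb{R}_{n}^{+}$ admits a stabilising $g$ with a Hadamard factorization), making the hypothesis $f\in\mathcal{W}_{n}$ superfluous. The paper's construction, by contrast, yields a $g$ built canonically out of $f$ and sidesteps any choice of auxiliary parameter, and it is the version of the argument that motivates the remark on computing $p^{\ast}$ via (\ref{ww1}). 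Your handling of the degenerate cases $m\leq2$ and the index bookkeeping ($i+j$ ranging over $2,\ldots,n-1$, matching the definition of $M$) are both fine.
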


\begin{proof}
Since for $1\leq n\leq3$ and $1\leq m\leq2$ the result is obvious, suppose
that $n$ $\geq4$ and take any $m\in\left\{  3,\ldots,n\right\}  $. Also, let
$f_{j}$ be given by (\ref{w22}). It follows from the assumption that
$\lambda_{i}\left(  f_{j}\right)  <1$ for $i=2,\ldots,m-1$; $j=0,\ldots,n-m$.
Bia\l as and Bia\l as--Cie\.{z} proved recently (see Theorems 8 and 10 in
\cite{Bia-Cie}) that if $f\in\mathcal{W}_{n}$, then there exists a positive
number $p^{\ast}$ such that for every $p>p^{\ast}$ the $p$--th Hadamard power
of $f$, i.e.
\[
f^{[p]}\left(  s\right)  =a_{0}^{p}+a_{1}^{p}s+\ldots+a_{n-1}^{p}s^{n-1}%
+a_{n}^{p}s^{n}%
\]
is stable. Thus, it follows from the assumption that for every $j=0,\ldots
,n-m$ there exists a positive number $p_{j}^{\ast}$ such that the polynomial
$f_{j}^{[p]}$ is stable for all $p>p_{j}^{\ast}$. Letting $p^{\ast}%
=\max\{p_{0}^{\ast},\ldots,p_{n-m}^{\ast}\}$, we get that all the polynomials
$f_{0}^{[p]},\ldots,f_{n-m}^{[p]}$ are stable for all $p>p^{\ast}$. Fix now
any $p>p^{\ast}$ and define the polynomial $g\in\mathbb{R}_{m}^{+}$ as
follows
\[
g=f_{0}^{[p]}\circ\ldots\circ f_{n-m}^{[p]}\text{.}%
\]
By the Garloff--Wagner theorem, the polynomial $g$ is stable. Moreover, it is
easy to see that the polynomials $g\circ f_{j}$ are also stable for
$j=0,\ldots,n-m$. This completes the proof.
\end{proof}

\begin{remark}
Bia\l as and Bia\l as--Cie\.{z} proved in \cite{Bia-Cie} that for $n\geq3$ and
$f\in\mathcal{W}_{n}$ the positive number $p^{\ast}$ that has occurred in the
proof of Theorem \ref{th6} can be calculated as%
\begin{equation}
p^{\ast}=\frac{\log\alpha^{\ast}}{\log\max_{2\leq i\leq n-1}\lambda_{i}\left(
f\right)  }\text{,} \label{ww1}%
\end{equation}
where $\alpha^{\ast}$ is given by (\ref{w11a}).
\end{remark}

\begin{remark}
\label{rem2} If, for $p^{\ast}$ as in (\ref{ww1}), we have $p^{\ast}<0.5$,
i.e. if
\begin{equation}
\max_{2\leq i\leq n-1}\lambda_{i}\left(  f\right)  <\gamma^{\ast}%
\approx0.216\,76\text{,} \label{ww2}%
\end{equation}
where $\gamma^{\ast}$ is the unique real solution to the equation
$\gamma\left(  \gamma-1\right)  ^{2}=1-4\gamma$, then the polynomial
$f^{[0.5]}\ $is stable. It means that inequality (\ref{ww2}) is a sufficient
condition for the polynomial $f$ to have a Hadamard factorization.
\end{remark}

\section{Examples}

\label{Sec_ex}

In this last part of the paper we show a few examples completing and
illustrating the results presented in the previous sections.

\begin{example}
\label{ex1}Let $f\in\mathbb{R}_{4}^{+}$ be a polynomial of the form%
\[
f\left(  s\right)  =2s^{4}+2s^{3}+4s^{2}+2s+3.
\]
Since%
\[
\lambda_{2}\left(  f\right)  =0.75\text{\quad and\quad}\lambda_{3}\left(
f\right)  =0.5,
\]
it follows that $f\in\mathcal{W}_{4}$. On the other hand, calculations show
that $\triangle_{3}\left(  f\right)  =-4$ and thus $f$ is not stable. It means
that the inclusion $\mathcal{H}_{4}\subset\mathcal{W}_{4}$ is proper. Other
similar examples may be constructed for $n>4$.
\end{example}

\begin{example}
\label{ex2}Let $f\in\mathbb{R}_{3}^{+}$ be a polynomial of the form%
\[
f\left(  s\right)  =s^{3}+3s^{2}+7s+10.
\]
The leading principal minors of the Hurwitz matrix associated with $f$, i.e.%
\[
H_{f}=\left(
\begin{array}
[c]{lll}%
3 & 1 & 0\\
10 & 7 & 3\\
0 & 0 & 10
\end{array}
\right)
\]
are all positive and thus, by the Routh--Hurwitz criterion, $f$ is stable. It
can be also easily checked, that $f\in\mathcal{W}_{3}^{\alpha}$ if and only if
$\alpha>\frac{10}{21}\approx\allowbreak0.476\,19$. It means that
$f\notin\mathcal{W}_{3}^{\alpha^{\ast}}$, where $\alpha^{\ast}\approx0.46557$
is a positive number defined in (\ref{w11a}). Moreover, by
Conclusion~\ref{con1}, we know that there exists a polynomial $g\in
\mathbb{R}_{n-4}^{+}$ such that the polynomial $F\in\mathbb{R}_{n}^{+}$ of the
form $F\left(  s\right)  =f\left(  s\right)  s^{n-3}+g\left(  s\right)  $ is
stable. Since the $2$-by-$2$ leading principal submatrix of the Hurwitz matrix
$H_{F}$ is identical to that of $H_{f}$, we obtain that $\mathcal{W}%
_{n}^{\alpha^{\ast}}\subsetneq\mathcal{H}_{n}$ for $n\geq4$.
\end{example}

\begin{example}
\label{ex3}One can easily check, for example using the Routh--Hurwitz
criterion, that the polynomial $f\in\mathbb{R}_{6}^{+}$ of the form
\[
f\left(  s\right)  =2s^{6}+6s^{5}+12s^{4}+16s^{3}+12s^{2}+10s+1
\]
is stable, whereas the polynomial
\[
f_{0}\left(  s\right)  =6s^{5}+12s^{4}+16s^{3}+12s^{2}+10s+1
\]
is not ($\triangle_{4}\left(  f_{0}\right)  =-516$). This shows that the
stability of the polynomial $f$ does not imply the stability of the
polynomials $f_{0},\ldots,f_{n-m}$ given by (\ref{w22}).
\end{example}

\begin{example}
\label{ex4} Let $f\in\mathbb{R}_{8}^{+}$ be of the form
\begin{align*}
f\left(  s\right)   &  =s^{8}+s^{7}+46s^{6}+34.5s^{5}+791s^{4}+\\
&  +395.75s^{3}+6026s^{2}+1509.375s+17160.
\end{align*}
It was recently shown (and can be easily verified, e.g. by the Routh--Hurwitz
criterion) that the polynomial $f$ is stable whereas the polynomial
$f^{\left[  1.139\right]  }$ is not (see Bia\l as and Bia\l as--Cie\.{z}
\cite{Bia-Cie}). Also, it follows from the stability of $f$ that $\lambda
_{i}\left(  f\right)  <1$ and thus%
\[
\lambda_{i}\left(  f^{\left[  0.139\right]  }\right)  =\lambda_{i}%
^{0.139}\left(  f\right)  <1\text{,}%
\]
for $i=2,\ldots,7$. It means that $f\in\mathcal{H}_{8}$ and $f^{\left[
0.139\right]  }\in\mathcal{W}_{8}$ but $f^{\left[  1.139\right]  }=f\circ
f^{\left[  0.139\right]  }\notin\mathcal{H}_{8}$ proving that
Theorem~\ref{th1} does not hold in the general case.
\end{example}

\begin{example}
\label{ex5} In order to illustrate Theorem \ref{th6}, consider the polynomial
\[
f\left(  s\right)  =2s^{5}+4s^{4}+4s^{3}+4s^{2}+2s+1.
\]
Since $\triangle_{4}\left(  f\right)  <0$, $f$ is not stable. According to
(\ref{w101}) we have%
\[
\lambda_{2}\left(  f\right)  =\lambda_{3}\left(  f\right)  =\lambda_{4}\left(
f\right)  =\frac{1}{2}%
\]
and thus $f\in\mathcal{W}_{5}$. Moreover, by (\ref{ww1}),
\[
p^{\ast}=\frac{\log\alpha^{\ast}}{\log\max_{2\leq i\leq4}\lambda_{i}\left(
f\right)  }\approx1.102\,9.
\]
Taking, for example, $m=4$ we can define the polynomial (for more details,
return to the proof of Theorem \ref{th6})%
\[
g\left(  s\right)  =(f_{0}^{[2]}\circ f_{1}^{[2]})\left(  s\right)
=2^{6}s^{4}+2^{8}s^{3}+2^{8}s^{2}+2^{6}s+2^{2}%
\]
which is stable and such that the generalized Hadamard product $f\bullet g$ is
stable too. Indeed, $f\bullet g=\left\{  F_{0},F_{1}\right\}  $ where%
\begin{align*}
F_{0}\left(  s\right)   &  =2^{8}s^{4}+2^{10}s^{3}+2^{10}s^{2}+2^{7}s+2^{2}\\
F_{1}\left(  s\right)   &  =2^{7}s^{4}+2^{10}s^{3}+2^{10}s^{2}+2^{8}s+2^{3}%
\end{align*}
are stable.
\end{example}

\section*{Acknowledgments}

This research work was partially supported by the Faculty of Applied
Mathematics AGH UST statutory tasks (grant no. 11.11.420.004) within subsidy
of Ministry of Science and Higher Education.

\section*{Appendix}

Let $n$ be a positive integer and let $f\in\mathcal{H}_{n}$. We shall prove
now that there exists a positive number $\varepsilon>0$ such that for every
$a_{n+1}\in\left(  0,\varepsilon\right)  $ the polynomial%
\[
F\left(  s\right)  =f\left(  s\right)  +a_{n+1}s^{n+1}%
\]
is stable. In fact, it is a little bit more than we need in the proof of Lemma
\ref{lem1}. Recall also, that the proof of this fact based on the
Hermite--Biehler theorem can be found in \cite{Bhat} (see Lemma 5.3 therein).

Suppose that $f$ is of the form (\ref{w1}) and let $F_{\alpha}\left(
s\right)  =f\left(  s\right)  +\alpha s^{n+1}$. The Hurwitz matrix associated
with $F_{\alpha}$ has the form%
\[
H_{F_{\alpha}}=%
\begin{pmatrix}
a_{n} & \alpha & 0 & 0 & \ldots & 0\\
a_{n-2} & a_{n-1} & a_{n} & \alpha & \ldots & 0\\
a_{n-4} & a_{n-3} & a_{n-2} & a_{n-1} & \ldots & 0\\
\vdots & \vdots & a_{n-4} & a_{n-3} & \ldots & 0\\
\vdots & \vdots & \vdots & \vdots & \ddots & \vdots\\
0 & 0 & 0 & 0 & \ldots & a_{0}%
\end{pmatrix}
.
\]
All leading principal minors of the matrix $H_{F_{\alpha}}$ are continuous
functions of $\alpha$. Moreover, we have $\triangle_{1}\left(  F_{\alpha
}\right)  =a_{n}$ and for $\alpha=0$ and $k=1,\ldots,n-1$
\[
\triangle_{k+1}\left(  F_{0}\right)  =a_{n}\triangle_{k}\left(  f\right)
\text{.}%
\]
It follows from the stability of $f$, that there exist positive numbers
$\varepsilon_{2},\ldots,\varepsilon_{n}$ such that for $k=2,\ldots,n$
\[
\triangle_{k}\left(  F_{\alpha}\right)  >0\text{,\quad for every }\alpha
\in\left(  0,\varepsilon_{k}\right)  \text{.}%
\]
Putting $\varepsilon=\min\left\{  \varepsilon_{2},\ldots,\varepsilon
_{n},1\right\}  $, we get by the Routh--Hurwitz criterion, that for any
$a_{n+1}\in\left(  0,\varepsilon\right)  $ the polynomial $F\left(  s\right)
=f\left(  s\right)  +a_{n+1}s^{n+1}$ is stable. This completes the proof.

\end{document}